\numberwithin{equation}{section}
\newtheorem{thm}{Theorem}[section]
\newtheorem{lem}[thm]{Lemma}
\newtheorem{prp}[thm]{Proposition}
\theoremstyle{definition}
\theoremstyle{remark}
\newtheorem{rem}[thm]{Remark}
\newcommand{\cM}{{\mathcal M}}
\newcommand{\cQ}{{\mathcal Q}}
\newcommand{\N}{{\mathbb N}}
\newcommand{\R}{{\mathbb R}}
\newcommand{\wL}{\mathrm{w}\hskip-0.6pt{L}}
\def\al{\alpha}
\def\bt{\beta}
\def\dl{\delta}
\def\Om{\Omega}
\def\0{\emptyset}
\def\6{\partial}
\def\8{\infty}
\def\lt{\left}
\def\rt{\right}
\def\ds{\displaystyle}
\newcommand{\iii}[1]{{\left\vert\kern-0.25ex\left\vert\kern-0.25ex\left\vert #1 
    \right\vert\kern-0.25ex\right\vert\kern-0.25ex\right\vert}}
 \def\XXint#1#2#3{{\setbox0=\hbox{$#1{#2#3}{\int}$}
 \vcenter{\hbox{$#2#3$}}\kern-.5\wd0}}
\begin{document}

\title{Choquet integrals, Hausdorff content and fractional operators}

\author[N.~Hatano]{Naoya Hatano}
\address{
Department of Mathematics, Chuo University, 1-13-27, Kasuga, Bunkyo-ku, Tokyo 112-8551, Japan.
}
\email{n.hatano.chuo@gmail.com}

\author[R.~Kawasumi]{Ryota Kawasumi}
\address{
Minohara 1-6-3 (B-2), Misawa, Aomori 033-0033, Japan
}
\email{rykawasumi@gmail.com}

\author[H.~Saito]{Hiroki Saito}
\address{
College of Science and Technology, Nihon University,
Narashinodai 7-24-1, Funabashi City, Chiba, 274-8501, Japan
}
\email{saitou.hiroki@nihon-u.ac.jp}

\author[H.~Tanaka]{Hitoshi~Tanaka}
\address{
Research and Support Center on Higher Education for the hearing and Visually Impaired, 
National University Corporation Tsukuba University of Technology,
Kasuga 4-12-7, Tsukuba City, Ibaraki, 305-8521 Japan
}
\email{htanaka@k.tsukuba-tech.ac.jp}

\thanks{
The first named author is financially supported by a Foundation of Research Fellows, The Mathematical Society of Japan.
The third named author is supported by 
Grant-in-Aid for Young Scientists (19K14577), 
the Japan Society for the Promotion of Science. 
The forth named author is supported by 
Grant-in-Aid for Scientific Research (C) (15K04918 and 19K03510), 
the Japan Society for the Promotion of Science.
}

\subjclass[2010]{Primary 42B25; Secondary 42B35.}

\keywords{
Choquet-Morrey spaces;
fractional integral operator;
fractional maximal operator;
Hausdorff content;
weak Choquet spaces.
}

\date{}

\begin{abstract}
It is shown that the fractional integral operator $I_{\al}$, $0<\al<n$, and the fractional maximal operator $M_{\al}$, $0\le\al<n$, are bounded on weak Choquet spaces with respect to Hausdorff content. 
We also investigate these operators on Choquet-Morrey spaces.
These results are extensions of the previous works due to Adams, Orobitg and Verdera, and Tang.
The results for the fractional integral operator $I_{\al}$ are essentially new.
\end{abstract}

\maketitle

\section{Introduction}\label{sec1}
The purpose of this paper is to study the boundedness properties of the fractional integral operator $I_{\al}$, $0<\al<n$, and the fractional maximal operator $M_{\al}$, $0\le\al<n$, on the framework of Choquet integrals with respect to Hausdorff content.

Let $n\in\N$ and $0<d\le n$.
The symbol $\cQ(\R^n)$ denotes the family of all cubes with parallel 
to coordinate axis in $\R^n$. 
The $d$-dimensional Hausdorff content of $E\subset\R^n$ 
is defined by
\[
H^d(E)
=
\inf\lt\{
\sum_{j=1}^{\8}\ell(Q_j)^d:\,
E\subset\bigcup_{j=1}^{\8}Q_j,
Q_j\in\cQ(\R^n)
\rt\},
\]
where, the infimum is taken over all coverings of the set $E$ by countable families of cubes $Q_j$, and $\ell(Q)$ stands for the side length of the cubes $Q$.
It is easily seen that
$H^n(E)$ is just the Lebesgue measure of $E$, 
which we will denote by $|E|$.
For any cube $Q$,
one has $H^d(Q)=\ell(Q)^d$.

For a non-negative function $f$,
the integral of $f$ with the respect to $H^d$ is taken in the Choquet sense: 
\[
\int_{\R^n}f\,{\rm d}H^d
=
\int_0^{\8}
H^d\lt(\lt\{
x\in \R^n:\,f(x)>t
\rt\}\rt)\,{\rm d}t.
\]
For $0<p<\8$,
the Choquet spaces $L^p(H^d)$ and 
the weak Choquet spaces $\wL^p(H^d)$ 
are to consist of all functions with 
the following properties,
\begin{align*}
\|f\|_{L^p(H^d)}
&:=
\lt(
\int_{\R^n}|f|^p\,{\rm d}H^d
\rt)^{\frac1p}<\8
\end{align*}
and
\begin{align*}
\|f\|_{\wL^p(H^d)}
&:=
\sup_{t>0}
t\,H^d\lt(\lt\{
x\in\R^n:\,|f(x)|>t
\rt\}\rt)^{\frac1p}<\8,
\end{align*}
respectively.
For $0<q\le p<\8$,
the Choquet-Morrey spaces $\cM^p_q(H^d)$
is to consist of all functions with 
the following property,
\[
\|f\|_{\cM^p_q(H^d)}
:=
\sup_{Q\in\cQ}
\ell(Q)^{\frac dp-\frac dq}
\lt(
\int_{Q}|f|^q\,{\rm d}H^d
\rt)^{\frac1q}<\8.
\]

The fractional maximal operator of order 
$\al$, $0\le\al<n$, is defined by
\[
M_{\al}f(x)
=
\sup_{Q\in\cQ(\R^n)}
\chi_{Q}(x)
\ell(Q)^{\al-n}\int_{Q}|f(y)|\,{\rm d}y,
\quad x\in\R^n,
\]
where, $\chi_{E}$ is a characteristic function of a set $E$.
If $\al=0$, then the operator $M_0$ is the usual Hardy-Littlewood maximal operator which will denote simply by $M$.
The fractional integral operator of order 
$\al$, $0<\al<n$, is defined by
\[
I_{\al}f(x)
=
\int_{\R^n}
\frac{f(y)}{|x-y|^{n-\al}}
\,{\rm d}y,\quad x\in\R^n.
\]

In the classical paper 
\cite{Orobitg-Verdera(1998)},
Orobitg and Verdera proved the following.
All the tools and ideas
that we will use
are essentially contained in this nice paper.

\begin{quotation}
For $0<d<n$, we have 
\begin{align*}
\|Mf\|_{\wL^{d/n}(H^d)}
&\lesssim
\|f\|_{L^{d/n}(H^d)},
\\
\|Mf\|_{L^p(H^d)}
&\lesssim
\|f\|_{L^p(H^d)},
\quad p>d/n.
\end{align*}
\end{quotation}

A disadvantage of the Hausdorff content $H^d$ is the following.

\begin{quotation}
It is not true that 
there exists a constant $C>0$ such that 
if $Q_1,\ldots,Q_m$ are non-overlapping dyadic cubes and $f\ge 0$, then
\[
\sum_{j=1}^m\int_{Q_j}f\,{\rm d}H^d
\le C
\int_{\bigcup_jQ_j}f\,{\rm d}H^d.
\]
This can be shown by subdividing 
the interval $[0, 1]$ in $2^m$ 
($m$ large enough) 
equal intervals, and taking $f\equiv 1$.
\end{quotation}

An advantage of the Hausdorff content $H^d$ is that the spaces $L^p(H^d)$, $0<p\le 1$, have the block decomposition and, as a corollary, the following assertion holds 
(see Theorem 2.3 in \cite{Saito-Tanaka-Watanabe2020}).

\begin{quotation}
Suppose that $T$ is a subadditive operator.
Let $0<d,\dl<n$, $0<p\le 1$ and $q\ge p$. Then 
the following statements are equivalent\text{:}

\begin{itemize}
\item[{\rm(a)}]
The inequality 
$\ds\|Tf\|_{L^q(H^{\dl})}
\le C_1
\|f\|_{L^p(H^d)}$ 
holds;
\item[{\rm(b)}]
The testing inequality
$\ds\|T\chi_{Q}\|_{L^q(H^{\dl})}
\le C_2\ell(Q)^{\frac1p}$
holds for any $Q\in\cQ(\R^n)$.
\end{itemize}

\noindent
Moreover, 
the least possible constants $C_1$ and $C_2$ are equivalent.
\end{quotation}

Under this advantage, by using an easy testing inequality,
the following result had verified 
(See \cite{Adams1998}).

\begin{quotation}
For $d/n<p<d/\al$, we have
\begin{equation}\label{1.1}
\|M_{\al}f\|_{L^p(H^{d-\al p})}
\lesssim
\|f\|_{L^p(H^d)}.
\end{equation}
For $0<d\le n$, we have
\begin{equation}\label{1.2}
\|M_{\al}f\|_{\wL^{d/n}(H^{d-\al d/n})}
\lesssim
\|f\|_{L^{d/n}(H^d)}.
\end{equation}
\end{quotation}

The simple trick
why the advantage has influence on 
the cases $p>1$ is that, 
for the fractional maximal operator $M_{\al}$,
one has the pointwise estimate
\[
M_{\al}f(x)^p\le M_{\al p}[f^p](x),
\quad x\in\R^n,
\]
and is reduced to the case $p=1$.
This trick no longer works for the fractional integral operator $I_{\al}$.
In this paper, we establish the following, 
the difficulty overcomes by clever use of 
Hedberg's trick (see Lemma \ref{lem4.1}) 
which is due to the first author N.~Hatano.

\begin{thm}\label{thm1.1}
\begin{itemize}
\item[{\rm(i)}]
Let $0<d\le n$ and $0\le \al<n$.
Suppose that $d/n\le r<p<d/\al$ and
\[
\frac{d-\al r}{q}
=
\frac{d-\al p}{p}.
\]
Then
\[
\|M_{\al}f\|_{\wL^q(H^{d-\al r})}
\lesssim
\|f\|_{\wL^p(H^d)};
\]
\item[{\rm(ii)}]
Let $0<d<n$ and $0<\al<n$.
Suppose that $d/n<r<p<d/\al$ and
\[
\frac{d-\al r}{q}
=
\frac{d-\al p}{p}.
\]
Then
\[
\|I_{\al}f\|_{\wL^q(H^{d-\al r})}
\lesssim
\|f\|_{\wL^p(H^d)};
\]
\item[{\rm(iii)}]
Let $0<d<n$ and $0<\al<n$.
Suppose that $d/n<r<p<d/\al$ and
\[
\frac{d-\al r}{q}
=
\frac{d-\al p}{p}.
\]
Then 
\[
\|I_{\al}f\|_{L^q(H^{d-\al r})}
\lesssim
\|f\|_{L^p(H^d)}.
\]
\end{itemize}
\end{thm}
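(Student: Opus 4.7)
All three parts follow the common strategy of reducing the estimate for the fractional operator to a known bound on a maximal operator via a pointwise inequality; the reduction mechanism differs across parts. The hypothesis $d/n\le r<p<d/\al$ together with $(d-\al r)/q=(d-\al p)/p$ forces $q>p>r$ and furnishes the scaling that makes each substitution dimensionally consistent.

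For part (i), the key input is the elementary pointwise identity $M_{\al}f(x)^{r}\le M_{\al r}[f^{r}](x)$, which converts the level set $\{M_{\al}f>t\}$ to $\{M_{\al r}[f^{r}]>t^{r}\}$. A Besicovitch-type covering argument extracts a disjoint family $\{Q_{j}\}$ satisfying $\ell(Q_{j})^{\al r-n}\int_{Q_{j}}|f|^{r}\rd y>t^{r}$, after which the sum $\sum_{j}\ell(Q_{j})^{d-\al r}$ is controlled by a layer-cake decomposition of each $\int_{Q_{j}}|f|^{r}\rd y$ against the distributional inequality $H^{d}(\{|f|>s\})\le s^{-p}\|f\|_{\wL^{p}(H^{d})}^{p}$. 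The exponents match by the scaling identity above.

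Parts (ii) and (iii) rely on a Hedberg-type pointwise estimate (Lemma~\ref{lem4.1}). Splitting $I_{\al}f(x)$ at $|x-y|=R$ and dyadically decomposing each region into annuli $\{|x-y|\sim 2^{k}R\}$, the near part is bounded by $R^{\al}Mf(x)$ using the definition of $M$ and a geometric series, while the far part is bounded by $R^{\al-d/r}\|f\|$, where $\|f\|=\|f\|_{\wL^{p}(H^{d})}$ for (ii) and $\|f\|=\|f\|_{L^{p}(H^{d})}$ for (iii). Optimizing in $R>0$ yields the pointwise bound
\[
I_{\al}f(x)\ls (Mf(x))^{1-\al r/d}\,\|f\|^{\al r/d}.
\]
Raising to the power $q$, integrating against $H^{d-\al r}$, and invoking the Orobitg--Verdera bound on $Mf$ in the appropriate Choquet space (weak for (ii), strong for (iii)) then completes the proof, after correcting the Hausdorff-content index via a secondary application of the type of pointwise inequality used in part (i).

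The main obstacle is the far-part tail estimate in the Hedberg splitting, namely bounding $\int_{|x-y|>R}|f(y)||x-y|^{\al-n}\rd y$ by $R^{\al-d/r}\|f\|$ when $f$ belongs to a Choquet space. The classical H\"older argument on $L^{p}$ relies on the countable additivity of Lebesgue measure and fails for the Hausdorff content, precisely because of the subdivision example highlighted in the introduction. One must instead work shell-by-shell, using the Choquet layer-cake representation of $f$ and carefully controlling the contribution of each dyadic annulus through the distributional inequality for $\wL^{p}(H^{d})$ or $L^{p}(H^{d})$. This shell-by-shell control is the new contribution attributed to N.~Hatano and is precisely what Lemma~\ref{lem4.1} provides; it is what makes parts (ii) and (iii) essentially new compared with the work of Adams, Orobitg--Verdera, and Tang.
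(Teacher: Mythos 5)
Your overall philosophy (reduce $I_{\al}$ and $M_{\al}$ to known maximal-function bounds via pointwise estimates) matches the paper's, but each of your reductions has a concrete problem. For part (i), the paper does not use $M_{\al}f^{r}\le M_{\al r}[f^{r}]$ plus a covering argument at all; it writes $t\,H^{\dl}(\Om_t)^{1/q}=H^{\dl}(\Om_t)^{1/q-1/r}\cdot t\,H^{\dl}(\Om_t)^{1/r}$ for $\Om_t=\{M_{\al}f>t\}=\{M_{\al}[f\chi_{\Om_t}]>t\}$, applies Adams' bound \eqref{1.1} (or \eqref{1.2}) to $f\chi_{\Om_t}$ at the smaller exponent $r<p$, and then uses the Kolmogorov-type inequality (Proposition \ref{prp2.3}) together with $H^{\dl}(E)\ge H^{d}(E)^{\dl/d}$ to pass to $\|f\|_{\wL^p(H^d)}$. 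Your covering route stalls at the summation step: disjointness plus the distributional inequality gives, per cube, only $\ell(Q_j)^{d-\al r}\lesssim(\|f\|_{\wL^p(H^d)}/t)^{q}$ -- each single cube already saturates the target bound -- and nothing in your sketch controls the number of cubes or the sum $\sum_j\ell(Q_j)^{d-\al r}$. This is exactly the pathology the introduction warns about (quasi-additivity over disjoint cubes fails for $H^d$), and it is why the paper routes the argument through the already-established strong bound rather than a fresh packing argument.

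For parts (ii) and (iii) the decisive point you miss is that the Hedberg splitting must be done against a \emph{fractional} maximal operator $M_{\bt}$ with $\bt>0$, not against $M=M_0$. Lemma \ref{lem4.1} gives $|I_{\al}f|\lesssim\|f\|_{\cM^p_{d/n}(H^d)}^{1-p/q}M_{\bt}f^{p/q}$, and the paper takes $\bt=\al r/p$ in (iii) (so that $d-\bt p=d-\al r$), and $\bt\in(\al r/p,\al)$ together with a dilation $\dl=\theta d$, $u=\theta p$ in (ii) (so that $\dl-\bt u=d-\al r$); it is precisely the positivity of $\bt$ that moves the Hausdorff content from $H^{d}$ down to $H^{d-\al r}$. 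With your $\bt=0$ version the maximal-function bound lands in the same content $H^{d}$ (this is the Adams-type statement proved in the appendix), and your proposed ``secondary correction of the content index'' cannot close the gap: the only available embedding, Proposition \ref{prp2.2} (W2), \emph{increases} the content dimension ($d\mapsto\theta d$, $\theta\ge1$), whereas you need to decrease it to $d-\al r$. Finally, your account of the far-part tail is not what Lemma \ref{lem4.1} does and is not needed: $J_2$ is handled annulus by annulus via the Orobitg--Verdera embedding (Lemma \ref{lem2.1} with $\theta=n/d$), which bounds $\int_{Q}|f|\,{\rm d}y$ by $\bigl(\int_{Q}|f|^{d/n}\,{\rm d}H^d\bigr)^{n/d}\le\ell(Q)^{n-d/p}\|f\|_{\cM^p_{d/n}(H^d)}$; the weak or strong Choquet norm of $f$ enters only at the very end through $\wL^p(H^d)\hookrightarrow\cM^p_{d/n}(H^d)$, respectively $L^p(H^d)\hookrightarrow\cM^p_{d/n}(H^d)$. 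No shell-by-shell distributional argument is required, and the exponent in the tail is $\al-d/p$ (or $\al-\dl/u$), not $\al-d/r$.
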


\begin{rem}
Taking into account \eqref{1.1}, 
one may expect that 
Theorem \ref{thm1.1} (i) holds for 
the case $q=r=p$.
This would be impossible because then,
as a special case,
we must have the false inequality:
\[
\|M_{\al}f\|_{\wL^1(H^{n-\al})}
\lesssim
\|f\|_{\wL^1(H^n)}.
\]
\end{rem}

The key ingredient in the proof of 
Theorem \ref{thm1.1} (i) 
is the following Kolmogorov-type inequality.

\begin{quotation}
For any measurable set $E\subset\R^n$,
\[
H^d(E)^{\frac1p-\frac1q}
\|f\chi_{E}\|_{L^q(H^d)}
\le
\lt(\frac{q}{p-q}\rt)^{1/p}
\|f\chi_{E}\|_{\wL^p(H^d)},
\quad
0<q<p<\8.
\]
\end{quotation}

This means that
the weak $L^p$ integrability
implies the local $L^q$ integrability
provided that $q<p$.
In Kolmogorov-type inequality, 
taking the supremum over all measurable set,
we can obtain the reverse inequality 
and, as a consequence, 
the weak Choquet norm
can be written as the following formula 
(see Proposition \ref{prp2.4}).

\begin{quotation}
For $0<q<p<\8$, we have
\[
\|f\|_{\wL^p(H^d)}
\le
\sup_{0<H^d(E)<\8}
H^d(E)^{\frac1p-\frac1q}
\|f\chi_{E}\|_{L^q(H^d)}
\le
\lt(\frac{q}{p-q}\rt)^{\frac1p}
\|f\|_{\wL^p(H^d)}.
\]
\end{quotation}

We notice that 
the parameter $q$ does not affect 
the set $\wL^p(H^d)$.
On the other hand, 
in the above supremum, 
if one restricts to 
the cube $Q\in\cQ(\R^n)$ 
instead of the general set $E\subset\R^n$, 
then one gets Morrey spaces and 
one no longer ignores the influence of the second parameter $q$.
In this paper, we establish the following

\begin{thm}\label{thm1.2}
\begin{itemize}
\item[{\rm(i)}]
({\cite[Theorem 2]{Tang2011}})
Let $0<d\le n$ and $0\le \al<n$.
Suppose that 
$d/n<r\le p<d/\al$ and
\[
\frac{d-\al r}{q}
=
\frac{d-\al p}{p}.
\]
Then 
\[
\|M_{\al}f\|_{\cM^q_r(H^{d-\al r})}
\lesssim
\|f\|_{\cM^p_r(H^d)};
\]
\item[{\rm(ii)}]
Let $0<d<n$ and $0<\al<n$.
Suppose that $d/n<r<p<d/\al$ and
\[
\frac{d-\al r}{q}
=
\frac{d-\al p}{p}.
\]
Then 
\[
\|I_{\al}f\|_{\cM^q_r(H^{d-\al r})}
\lesssim
\|f\|_{\cM^p_r(H^d)}.
\]
\end{itemize}
\end{thm}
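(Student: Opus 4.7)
Part (i) is due to Tang, whose proof leverages the pointwise inequality $M_\al f^p \le M_{\al p}[f^p]$ to reduce the Morrey-type bound for $M_\al$ to the base case $p=1$. This trick has no counterpart for $I_\al$, so my plan for part (ii) is to mimic the proof of Theorem~\ref{thm1.1}~(iii), with the $L^p(H^d)$-norm replaced by $\|\cdot\|_{\cM^p_r(H^d)}$, using the Choquet--Hausdorff form of Hedberg's inequality (Lemma~4.1) as the main technical device.

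My first step is to establish a Morrey-flavoured Hedberg estimate of the form
\[
|I_\al f(x)| \lesssim (Mf(x))^{1-\al p/d}\,\|f\|_{\cM^p_r(H^d)}^{\al p/d},
\]
via the usual split $I_\al f(x) \le t^{\al}Mf(x) + II(x,t)$ and a dyadic decomposition of the tail $II(x,t)$ into annuli $|y-x|\sim 2^k t$. The key ingredient is the elementary comparison $|E| \lesssim \ell(Q)^{n-d}H^d(E)$ for $E\subset Q$, which follows by restricting competing cube covers of $E$ to cubes of size at most $\ell(Q)$ and using $d\le n$. Combined with H\"older's inequality for the Choquet integral and the definition of the Morrey norm, this delivers $\int_Q|f|\rd y \lesssim \ell(Q)^{n-d/p}\|f\|_{\cM^p_r(H^d)}$; the geometric sum in $II$ then converges (using $\al<d/p$) and optimizing in $t$ produces Hedberg. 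Raising this pointwise estimate to the $r$-th power and integrating against $H^{d-\al r}$ over a fixed cube $Q_0$ yields
\[
\int_{Q_0}|I_\al f|^r\,\rd H^{d-\al r}
\lesssim
\|f\|_{\cM^p_r(H^d)}^{r\al p/d}
\int_{Q_0}(Mf)^{r(d-\al p)/d}\,\rd H^{d-\al r},
\]
and the identity $(d-\al r)/q = (d-\al p)/p$ reduces the Morrey target to bounding the remaining integral by $\ell(Q_0)^{d(1-r/p)}\|f\|_{\cM^p_r(H^d)}^{r(d-\al p)/d}$.

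The main obstacle is this last estimate. The parameter identity $\|\chi_Q\|_{\cM^p_r(H^d)} = \ell(Q)^{d/p} = \|\chi_Q\|_{\cM^{p(d-\al r)/d}_{r(d-\al p)/d}(H^{d-\al r})}$ hints that one should recast $Mf$ in the transformed Morrey space and invoke part~(i) with $\al=0$, but pushing this beyond atomic pieces is delicate since $H^d$ and $H^{d-\al r}$ are not comparable on thin sets. A more robust route is a local/non-local splitting $f = f\chi_{2Q_0} + f\chi_{(2Q_0)^c}$: the non-local part is controlled pointwise on $Q_0$ by the same dyadic tail estimate used above, yielding the sharp $\ell(Q_0)^{d/r-d/p}\|f\|_{\cM^p_r(H^d)}$ decay in $L^r(H^{d-\al r})$ directly; the local part $I_\al(f\chi_{2Q_0})$ is then passed through Theorem~\ref{thm1.1}~(iii), with input exponents $(\tilde p,\tilde r,\tilde q)$ chosen so that a H\"older embedding on $Q_0$ adjusts the output integrability index back to $r$ and the $L^{\tilde p}(H^d)$-norm of $f\chi_{2Q_0}$ is controlled by a power of $\ell(Q_0)$ times $\|f\|_{\cM^p_r(H^d)}$. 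For the low-integrability regime $r<1$ (allowed by $r>d/n$ when $d<n$), H\"older is replaced by the block/atomic decomposition inherent to Choquet $L^r$ spaces quoted from \cite{Saito-Tanaka-Watanabe2020}.
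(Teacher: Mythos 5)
Your plan for part (ii) diverges from the paper's, and the divergence is where the gap lies. The paper proves (ii) ``in the same manner as Theorem \ref{thm1.1} (ii)'': it keeps a \emph{positive} fractional order $\bt$ (near $\tfrac rp\al$, together with a $\theta$-dilation of the parameters) inside the generalized Hedberg estimate of Lemma \ref{lem4.1}, so that the pointwise bound reads $|I_\al f|\lesssim \|f\|_{\cM^u_{\dl/n}(H^{\dl})}^{1-u/q}(M_\bt f)^{u/q}$ and the entire dimension drop $H^d\to H^{d-\al r}$ is carried out by $M_\bt$ through Tang's Theorem \ref{thm1.2} (i). You instead take $\bt=0$; your derivation of the resulting Hedberg inequality and your treatment of the non-local part $I_\al(f\chi_{(2Q_0)^c})$ are both fine (the tail estimate is exactly the $J_2$ computation in the proof of Lemma \ref{lem4.1}, and the exponent $\frac{d-\al r}{q}+\al-\frac dp=0$ checks out). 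The problem is the local part.

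Concretely: to control $\|f\chi_{2Q_0}\|_{L^{\tilde p}(H^d)}$ by a power of $\ell(Q_0)$ times $\|f\|_{\cM^p_r(H^d)}$ you are forced to take $\tilde p\le r$, since the Morrey norm only encodes local $L^r(H^d)$-integrability and there is no reverse H\"older self-improvement. Theorem \ref{thm1.1} (iii) then requires $\tilde r<\tilde p\le r$, so its output lives in $L^{\tilde q}(H^{d-\al\tilde r})$ with $d-\al\tilde r>d-\al r$. To convert this back to $\|I_\al(f\chi_{2Q_0})\chi_{Q_0}\|_{L^r(H^{d-\al r})}$ you must bound the \emph{lower}-dimensional content $H^{d-\al r}$ from above by the \emph{higher}-dimensional one $H^{d-\al\tilde r}$ on $Q_0$, and this is false: a compact set $E\subset Q_0$ of Hausdorff dimension strictly between $d-\al r$ and $d-\al\tilde r$ has $H^{d-\al r}(E)>0$ but $H^{d-\al\tilde r}(E)=0$. (Your inequality $|E|\lesssim\ell(Q)^{n-d}H^d(E)$ goes in the opposite, harmless direction.) The same obstruction is why your first route --- recasting $Mf$ in a Morrey space over $H^{d-\al r}$ and invoking part (i) with $\al=0$, which only yields information over $H^d$ --- does not close; you flagged that yourself. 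The fix is not a better splitting but a different Hedberg exponent: choose $\bt$ so that $d-\bt\,(\cdot)$ already equals $d-\al r$ on the maximal-operator side, and let Theorem \ref{thm1.2} (i) for $M_\bt$ absorb the change of content.
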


This paper is organized as follows.
In Section 2,
We give a proof of Kolmogorov-type inequality
and summarize some elementary properties.
In Section 3,
we give the proof of theorems.
In section 4, as an appendix, 
we gather some results.

Throughout this paper,
we use the following notation.
Let $X$ and $Y$ be the normed spaces with 
$\|\cdot\|_{X}
\lesssim
\|\cdot\|_{Y}$, 
then we write 
$X\hookleftarrow Y$ or 
$Y\hookrightarrow X$ 
(sometimes called the embedding). 
If $X\hookleftarrow Y$ and 
$X\hookrightarrow Y$, then 
we write $X=Y$.

For the quantities $A$ and $B$,
if $A\le C B$, then 
we write $A\lesssim B$ or 
$B\gtrsim A$ and 
if $A\lesssim B$ and $A\gtrsim B$, then 
we write $A\sim B$.

\section{Preliminaries}\label{sec2}
We use the following fact of the Hausdorff content, 
due to Orobitg and Verdera in 
{\cite[Lemma~3]{Orobitg-Verdera(1998)}}.

\begin{lem}\label{lem2.1}
Let $0<d\le n$ and 
$0<p<\8$. Then, for 
$1\le\theta\le n/d$,
\begin{equation}\label{2.1}
\|f\|_{L^{\theta p}(H^{\theta d})}
\le
\theta^{\frac1{\theta p}}
\|f\|_{L^p(H^d)}.
\end{equation}
\end{lem}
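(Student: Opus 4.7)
The plan is to decouple the two shifts $d \mapsto \theta d$ and $p \mapsto \theta p$, and then close with a Hardy-type step for monotone functions.

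First, I would establish the purely geometric inequality
\[
H^{\theta d}(E) \le H^d(E)^\theta, \qquad E \subset \R^n,
\]
valid for every $\theta \ge 1$. Indeed, for any cube cover $E \subset \bigcup_j Q_j$, the contractive embedding $\ell^1 \hookrightarrow \ell^\theta$ gives
\[
\sum_j \ell(Q_j)^{\theta d} = \sum_j \bigl(\ell(Q_j)^d\bigr)^\theta \le \Bigl(\sum_j \ell(Q_j)^d\Bigr)^\theta,
\]
and the claim follows by taking the infimum over all such covers.

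Second, I would rewrite the two Choquet norms in terms of the distribution function
\[
\psi(t) := H^d\bigl(\{|f|>t\}\bigr),
\]
which is non-increasing. The layer-cake representation combined with the first step yields
\[
\|f\|_{L^{\theta p}(H^{\theta d})}^{\theta p} \le \theta p \int_0^{\infty} \psi(t)^{\theta}\, t^{\theta p - 1}\, dt, \qquad \|f\|_{L^{p}(H^{d})}^{p} = p \int_0^{\infty} \psi(t)\, t^{p-1}\, dt.
\]
After substituting $s = t^p$ and writing $\Psi(s) := \psi(s^{1/p})$ (still non-increasing), the required inequality reduces to
\[
\int_0^\infty \Psi(s)^{\theta}\, s^{\theta-1}\, ds \le \Bigl(\int_0^\infty \Psi(s)\, ds\Bigr)^{\theta};
\]
the factor $\theta^{1/(\theta p)}$ in the conclusion is exactly the accounting of the loose $\theta$ sitting in the weight $\theta p\, t^{\theta p-1}$ on the left.

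Third, I would close with a short Hardy argument. Setting $I := \int_0^\infty \Psi(s)\,ds$, the monotonicity of $\Psi$ gives $s\,\Psi(s) \le \int_0^s \Psi \le I$, and therefore, using $\theta \ge 1$,
\[
\Psi(s)^{\theta}\, s^{\theta-1} = \Psi(s)\,\bigl(s\,\Psi(s)\bigr)^{\theta-1} \le I^{\theta-1}\,\Psi(s),
\]
and integration in $s$ produces the target bound $I^{\theta}$. The main obstacle, I expect, is pure bookkeeping: tracking the constant $\theta^{1/(\theta p)}$ cleanly through the substitution $s = t^p$ and the change from $\psi$ to $\Psi$ without losing or double-counting a factor of $\theta$. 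The two substantive ingredients---the content inequality via $\ell^1 \hookrightarrow \ell^\theta$, and the one-line Hardy step exploiting monotonicity---are otherwise straightforward.
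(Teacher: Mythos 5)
Your proposal is correct and follows essentially the same route as the paper: the content comparison $H^{\theta d}(E)\le H^d(E)^{\theta}$ via the $\ell^1\hookrightarrow\ell^{\theta}$ embedding, a power substitution in the layer-cake formula producing the factor $\theta$, and the factorization $\Psi(s)^{\theta}s^{\theta-1}=\Psi(s)\,(s\Psi(s))^{\theta-1}\le I^{\theta-1}\Psi(s)$. The only cosmetic difference is that you justify $s\Psi(s)\le I$ by monotonicity of the distribution function, while the paper gets the same bound from monotonicity of the Choquet integral; the constant $\theta^{1/(\theta p)}$ comes out identically.
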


\begin{proof}
It follows that
\begin{align*}
\|f\|_{L^{\theta p}(H^{\theta d})}^{\theta p}
&=
\int_0^{\8}
H^{\theta d}(\{|f|^{\theta p}>t\})
\,{\rm d}t\qquad(t=s^{\theta})
\\ &=
\theta
\int_0^{\8}
H^{\theta d}(\{|f|^p>s\})
s^{\theta-1}
\,{\rm d}s.
\end{align*}
Since,
\begin{equation}\label{2.2}
\lt(\sum_j a_j^q\rt)^{\frac1q}
\le
\lt(\sum_j a_j^p\rt)^{\frac1p},
\quad a_j\ge 0,\quad 0<p<q<\8,
\end{equation}
we obtain
\[
H^{\theta d}(\{|f|^p>s\})
\le
\lt(H^d(\{|f|^p>s\})\rt)^{\theta}.
\]
Thus,
\begin{align*}
&\le
\theta
\int_0^{\8}
\lt(H^d(\{|f|^p>s\})\rt)^{\theta}
s^{\theta-1}
\,{\rm d}s
\\ &=
\theta
\int_0^{\8}
\lt(H^d(\{|f|^p>s\})s\rt)^{\theta-1}
H^d(\{|f|^p>s\})
\,{\rm d}s.
\end{align*}
We have that
\[
H^d(\{|f|^p>s\})s
=
\int_{\{|f|^p>s\}}s\,{\rm d}H^d
\le
\int_{\{|f|^p>s\}}|f|^p\,{\rm d}H^d
\le
\int_{\R^n}|f|^p\,{\rm d}H^d.
\]
Consequently,
\[
\le
\theta
\|f\|_{L^p(H^d)}^{\theta p}.
\]
This completes the proof.
\end{proof}

We summarize elementary properties of 
the weak Choquet spaces.

\begin{prp}\label{prp2.2}
\begin{enumerate}[{\rm (W1)}]
\item (Chebyshev's inequality) 
For $p>0$, then 
$L^p(H^d)\hookrightarrow \wL^p(H^d)$ 
holds;
\item
If $0<d\le\theta d\le n$ and 
$0<p<\8$, then
\[
\wL^p(H^d)
\hookrightarrow
\wL^{\theta p}(H^{\theta d});
\]
\item
If $d=n$, then 
the weak Choquet spacse $\wL^p(H^d)$ 
are the usual weak $L^p$ spaces $\wL^p(\R^n)$.
\end{enumerate}
\end{prp}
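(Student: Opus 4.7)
The plan is to dispatch the three assertions separately; each is a short layer-cake / monotonicity argument, and none should present a genuine obstacle, but one must be careful with the subadditivity used in (W2).

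For (W1) I would argue directly from the Choquet definition of the integral. Fix $t>0$ and write
\[
\|f\|_{L^p(H^d)}^p=\int_0^\infty H^d(\{|f|^p>s\})\,{\rm d}s\ge\int_0^{t^p}H^d(\{|f|^p>s\})\,{\rm d}s.
\]
On the interval $s\in(0,t^p)$ the level set $\{|f|^p>s\}$ contains $\{|f|>t\}$, so by monotonicity of $H^d$ the last integrand is at least $H^d(\{|f|>t\})$. Integrating and raising to the $1/p$ power yields $tH^d(\{|f|>t\})^{1/p}\le\|f\|_{L^p(H^d)}$, and taking the supremum in $t$ finishes (W1).

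For (W2) the key is the elementary inequality \eqref{2.2} which already appears inside the proof of Lemma~\ref{lem2.1}: for any admissible covering $\{Q_j\}$ of a set $E$ and any $\theta\ge 1$,
\[
\sum_j\ell(Q_j)^{\theta d}\le\bigl(\sum_j\ell(Q_j)^d\bigr)^{\theta}.
\]
Infimizing the right-hand side over all such coverings gives the pointwise comparison of contents $H^{\theta d}(E)\le H^d(E)^{\theta}$ whenever $\theta d\le n$. Applying this with $E=\{|f|>t\}$,
\[
tH^{\theta d}(\{|f|>t\})^{\frac1{\theta p}}\le tH^d(\{|f|>t\})^{\frac1p}\le\|f\|_{\wL^p(H^d)},
\]
and taking the supremum in $t>0$ delivers the embedding. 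The only subtlety is to make sure the content comparison is stated as a statement about the contents themselves (before taking suprema in $t$), which is why I would isolate the covering inequality as a preliminary step.

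For (W3) there is nothing to prove beyond noting, as already recorded in the introduction, that $H^n$ coincides with Lebesgue measure on $\R^n$; in this case the Choquet distribution function is literally the Lebesgue distribution function, and the definition of $\wL^p(H^n)$ collapses to that of the usual weak Lebesgue space. The main obstacle, such as it is, lies only in (W2): one must verify that the content inequality $H^{\theta d}\le(H^d)^{\theta}$ is used at the correct place (on the superlevel set, inside the supremum), rather than trying to compare the weak norms directly by manipulating the supremum first.
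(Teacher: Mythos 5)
Your proof is correct and follows essentially the same route as the paper: (W1) and (W3) are immediate from the definitions, and (W2) rests on the content comparison $H^{\theta d}(E)\le H^d(E)^{\theta}$ obtained from the elementary inequality \eqref{2.2} applied to coverings, exactly as the paper does. Your extra care in isolating the covering inequality before taking the supremum in $t$ is a harmless (and slightly more explicit) presentation of the same argument.
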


\begin{proof}
The assertions (W1) and (W3) 
can be shown immediately from definition.
To prove (W2),
we observe that
\begin{align*}
\|f\|_{\wL^{\theta p}(H^{\theta d})}
&=
\sup_{t>0}
t\,H^{\theta d}(\{
x\in\R^n:\,|f(x)|>t
\})^{1/(\theta p)}
\\ &\le
\sup_{t>0}
t\,H^d(\{
x\in\R^n:\,|f(x)|>t
\})^{1/p}
=
\|f\|_{\wL^p(H^d)},
\end{align*}
where, we have used \eqref{2.2}.
\end{proof}

We prove the Kolmogorov-type inequality.

\begin{quotation}
For $0<p,r\le\8$, 
we define the Lorentz quasinorm 
\[
\|f\|_{L^{p,r}(H^d)}
=
\lt(
\int_0^{\8}
(t^pH^d(\{|f|>t\}))^{\frac rp}
\frac{{\rm d}t}{t}
\rt)^{\frac1r}
\]
and we denote the Lorentz space 
$L^{p,r}(H^d)$ by 
the set of all functions for which the above quasinorms are finite.
\end{quotation}

\begin{prp}\label{prp2.3}
Let $0<d\le n$, 
$0<q<p<\8$ and 
$E$ be any measurable set. 

\begin{itemize}
\item[{\rm(a)}]
if $q\le r\le p$, then
\[
H^d(E)^{\frac1p-\frac1q}
\lt(\int_{E}|f|^q\,{\rm d}H^d\rt)^{\frac1q}
\lesssim
q^{\frac1r}
\|f\chi_{E}\|_{L^{p,q}(H^d)};
\]
\item[{\rm(b)}]
If $p<r$, then
\begin{equation}\label{2.3}
H^d(E)^{\frac1p-\frac1q}
\lt(\int_{E}|f|^q\,{\rm d}H^d\rt)^{\frac1q}
\lesssim
\lt(q\lt(\frac{r-p}{r(p-q)}\rt)^{\frac{r-p}{r}}\rt)^{\frac1p}
\|f\chi_{E}\|_{L^{p,r}(H^d)}.
\end{equation}
\end{itemize}
\end{prp}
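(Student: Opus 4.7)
The plan is to reduce both inequalities, via the layer-cake representation of the Choquet integral, to manipulations of the single distribution function $\lambda(t)=H^d(\{x\in\R^n:|f(x)|\chi_E(x)>t\})$, which satisfies $\lambda(t)\le H^d(E)$ by the monotonicity of $H^d$. Two identities will be needed: the Cavalieri formula $\int_E|f|^q\rd H^d=q\int_0^\infty t^{q-1}\lambda(t)\rd t$ (obtained by a change of variable inside the definition of the Choquet integral) and $\|f\chi_E\|_{L^{p,r}(H^d)}^r=\int_0^\infty t^{r-1}\lambda(t)^{r/p}\rd t$ (direct from the definition of the Lorentz quasinorm).

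For part (a), the key move is the elementary interpolation $\lambda(t)=\lambda(t)^{q/p}\lambda(t)^{1-q/p}\le H^d(E)^{1-q/p}\lambda(t)^{q/p}$, obtained by applying $\lambda(t)\le H^d(E)$ to only one of the two factors. Substituting into the layer-cake formula yields $\int_E|f|^q\rd H^d\le qH^d(E)^{1-q/p}\|f\chi_E\|_{L^{p,q}(H^d)}^q$, and the inequality follows by taking the $q$-th root and multiplying by $H^d(E)^{1/p-1/q}$; the numerical factor $q^{1/q}$ that appears is absorbed into the $\lesssim$.

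For part (b) this direct method fails: when $r>p$ the Lorentz measure $\rd t/t$ weights large $t$ too heavily to globally dominate $\rd t$, so a cutoff must be introduced. I split $q\int_0^\infty t^{q-1}\lambda(t)\rd t=q\int_0^T+q\int_T^\infty$ at a free parameter $T>0$. The inner piece is bounded by $H^d(E)T^q$ using $\lambda(t)\le H^d(E)$. For the outer piece I write $t^{q-1}\lambda(t)=[t^p\lambda(t)]\cdot t^{q-1-p}$ and apply H\"older's inequality on $(T,\infty)$ with the conjugate exponents $r/p$ and $r/(r-p)$, which are legitimate precisely because $r>p$; allocating a weight $t^{-p/r}$ to the first factor makes its $r/p$-integral exactly $\int_T^\infty[t^p\lambda(t)]^{r/p}\rd t/t\le\|f\chi_E\|_{L^{p,r}(H^d)}^r$.

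The main computational obstacle is the residual pure-power integral $\int_T^\infty t^C\rd t$ coming from the second H\"older factor: a short arithmetic check identifies $C+1=r(q-p)/(r-p)$, which is negative precisely because $q<p$, so the integral converges and evaluates to $T^{r(q-p)/(r-p)}(r-p)/(r(p-q))$; raising to the power $(r-p)/r$ produces the factor $T^{q-p}\bigl[\tfrac{r-p}{r(p-q)}\bigr]^{(r-p)/r}$ that appears in \eqref{2.3}. Assembling everything gives $\int_E|f|^q\rd H^d\lesssim H^d(E)T^q+q\|f\chi_E\|_{L^{p,r}(H^d)}^p\bigl[\tfrac{r-p}{r(p-q)}\bigr]^{(r-p)/r}T^{q-p}$, which I minimize by equating the two summands --- that is, setting $T^p$ to the ratio of their coefficients --- and substituting back. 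Taking the $q$-th root and multiplying by $H^d(E)^{1/p-1/q}$ then reproduces \eqref{2.3} with precisely the advertised constant.
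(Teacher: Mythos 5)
Your proof of part (b) is correct and is essentially the paper's own argument: the same splitting of the Cavalieri integral at a free threshold, the same H\"older application with exponents $r/p$ and $r/(r-p)$ on the tail, the same convergent residual power integral with exponent $r(q-p)/(r-p)-1$, and the same optimization of the threshold; the resulting constant matches \eqref{2.3}. Part (a) is where you genuinely deviate. The paper runs the same split-and-optimize scheme there as well, estimating the tail by pulling out $H^d(E\cap\{|f|>t\})^{1-r/p}\le H^d(E)^{1-r/p}$ (legitimate since $r\le p$), which yields a bound in terms of $\|f\chi_E\|_{L^{p,r}(H^d)}$ for every $r\in[q,p]$; the right-hand side $\|f\chi_E\|_{L^{p,q}(H^d)}$ printed in the statement appears to be a slip for $\|f\chi_E\|_{L^{p,r}(H^d)}$, or else follows from it via the Lorentz nesting $L^{p,q}\hookrightarrow L^{p,r}$. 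You instead interpolate the distribution function globally, $\lambda(t)\le H^d(E)^{1-q/p}\lambda(t)^{q/p}$, and integrate once, with no threshold and no optimization. This is shorter and proves exactly the inequality as displayed (with constant $q^{1/q}$ in place of $q^{1/r}$, harmless under $\lesssim$), but it only reaches the $r=q$ endpoint of the one-parameter family that the paper's computation actually establishes. Both arguments are sound; nothing is missing for the statement as written.
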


\begin{proof}
\noindent{\bf(a)}\quad
Let $E$ be an arbitrary measurable set 
satisfying $0<H^d(E)<\8$.
For any $A>0$,
we observe that
\begin{align*}
&\int_{E}|f|^q\,{\rm d}H^d
=
\int_0^{\8}
H^d(E\cap\{|f|^q>t\})\,{\rm d}t
\\ &= 
q\int_0^{\8}
t^{q-1}
H^d(E\cap\{|f|>t\})\,{\rm d}t
\\ &\le
q\int_0^{A}
t^{q-1}H^d(E)\,{\rm d}t
+
q\int_{A}^{\8}
t^{q-r}
\cdot
t^rH^d(E\cap\{|f|>t\})^{\frac rp}
\cdot
H^d(E\cap\{|f|>t\})^{1-\frac rp}
\,\frac{{\rm d}t}{t}
\\ &\le
H^d(E)^{1-\frac rp}
\lt(
A^qH^d(E)^{\frac rp}
+
qA^{q-r}
\|f\chi_{E}\|_{L^{p,r}(H^d)}^r
\rt).
\end{align*}
If we take 
$A^r
=
q\|f\chi_{E}\|_{L^{p,r}(H^d)}^r
H^d(E)^{-r/p}$, then 
the two terms on the right-hand side of the above inequality balance. Thus,
\begin{align*}
\int_{E}|f|^q\,{\rm d}H^d
&\le 2
H^d(E)^{1-\frac rp}
\lt(q\|f\chi_{E}\|_{L^{p,r}(H^d)}^r\rt)^{\frac qr}
H^d(E)^{\frac rp-\frac qp}
\\ &\le 2q^{\frac qr}
H^d(E)^{1-\frac qp}
\|f\chi_{E}\|_{L^{p,r}(H^d)}^q.
\end{align*}
and this implies
\[
H^d(E)^{\frac1p-\frac1q}
\lt(\int_{E}|f|^q\,{\rm d}H^d\rt)^{\frac1q}
\lesssim
q^{\frac1r}
\|f\chi_{E}\|_{L^{p,q}(H^d)}.
\]

\noindent{\bf(b)}\quad
It follows that
\begin{align*}
&\int_{A}^{\8}
t^{q-1}
H^d(E\cap\{|f|>t\})\,{\rm d}t
\\ &\le
\int_{A}^{\8}
t^qH^d(\{|f|>t\})
\,\frac{{\rm d}t}{t}
\\ &=
\int_{A}^{\8}
t^{q-p}
\cdot
t^pH^d(\{|f|>t\})
\,\frac{{\rm d}t}{t}
\\ &\le
\lt(
\int_{A}^{\8}
t^{(q-p)\frac{r}{r-p}}
\,\frac{{\rm d}t}{t}
\rt)^{\frac{r-p}{r}}
\lt(
\int_{A}^{\8}
(t^pH^d(\{|f|>t\}))^{\frac rp}
\,\frac{{\rm d}t}{t}
\rt)^{\frac pr}
\\ &\le
\lt(\frac{r-p}{r(p-q)}\rt)^{\frac{r-p}{r}}
A^{q-p}
\|f\chi_{E}\|_{L^{p,r}(H^d)}^p.
\end{align*}
This implies
\[
\int_{E}|f|^q\,{\rm d}H^d
\lesssim
A^qH^d(E)
+
q\lt(\frac{r-p}{r(p-q)}\rt)^{\frac{r-p}{r}}
A^{q-p}
\|f\chi_{E}\|_{L^{p,r}(H^d)}^p.
\]
The same as the above,
\[
\int_{E}|f|^q\,{\rm d}H^d
\lesssim
\lt(q\lt(\frac{r-p}{r(p-q)}\rt)^{\frac{r-p}{r}}\rt)^{\frac qp}
H^d(E)^{1-\frac qp}
\|f\chi_{E}\|_{L^{p,r}(H^d)}^q,
\]
which yields \eqref{2.3}.
\end{proof}

\begin{rem}
Taking $E=Q\in\cQ(\R^n)$ and 
$r=\8$ in \eqref{2.3}, we have 
\[
\ell(Q)^{\frac dp-\frac dq}
\|f\chi_{Q}\|_{L^q(H^d)}
\lesssim
\|f\chi_{Q}\|_{L^{p,\8}(H^d)}.
\]
For an arbitrary $r$ with 
$p\le r<\8$, multiplying 
$\ds\ell(Q)^{\frac dr-\frac dp}$
to the both sides of the above inequality,
we have that
\[
\ell(Q)^{\frac dr-\frac dq}
\|f\chi_{Q}\|_{L^q(H^d)}
\lesssim
\ell(Q)^{\frac dr-\frac dp}
\|f\chi_{Q}\|_{L^{p,\8}(H^d)}.
\]
This means that
\begin{equation}\label{2.4}
\|f\|_{\cM^r_q(H^d)}
\lesssim
\|f\|_{\cM^r_{p,\8}(H^d)}
\end{equation}
for $0<q<p\le r<\8$. Here, 
\[
\|f\|_{\cM^r_{p,\8}(H^d)}
:=
\sup_{Q\in\cQ(\R^n)}
\ell(Q)^{\frac dr-\frac dp}
\|f\chi_{Q}\|_{L^{p,\8}(H^d)}.
\]
That \eqref{2.4} is interesting because
one always has 
\[
\|f\|_{\cM^r_q(H^d)}
\le
\|f\|_{\cM^r_p(H^d)},
\]
and
\[
\|f\|_{\cM^r_{p,\8}(H^d)}
\le
\|f\|_{\cM^r_p(H^d)}.
\]
That \eqref{2.4} was found in~\cite{Gunawan-Hakim-Limanta-Masta(2017)} and  the book \cite{Sa}.
\end{rem}

In the case $r=\8$ in 
Proposition \ref{prp2.3},
we can show the reverse inequality.

\begin{prp}\label{prp2.4}
Let $0<d\le n$ and 
$0<q<p<\8$. Then 
\[
\|f\|_{\wL^p(H^d)}
\le
\sup_{0<H^d(E)<\8}
H^d(E)^{\frac1p-\frac1q}
\lt(
\int_{E}|f|^q\,{\rm d}H^d
\rt)^{\frac 1q}
\le
\lt(\frac{q}{p-q}\rt)^{1/p}
\|f\|_{\wL^p(H^d)}.
\]
\end{prp}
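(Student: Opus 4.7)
The statement consists of two inequalities, which I would handle separately. The right-hand inequality is essentially Proposition \ref{prp2.3}(b) in the limiting case $r=\infty$, with the supremum taken over admissible $E$. Rather than formally passing to the limit inside the formula of Proposition \ref{prp2.3}(b), I would redo the short argument directly: for fixed $E$ with $0<H^d(E)<\infty$ write
\[
\int_E|f|^q\,{\rm d}H^d
=q\int_0^{\infty}t^{q-1}H^d(E\cap\{|f|>t\})\,{\rm d}t,
\]
split the integral at a threshold $A>0$, bound $H^d(E\cap\{|f|>t\})$ by $H^d(E)$ on $(0,A]$ and by $t^{-p}\|f\|_{\wL^p(H^d)}^p$ on $(A,\infty)$, and finally optimize in $A$. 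The choice balancing the two resulting terms is of the form $A\sim\|f\|_{\wL^p(H^d)}H^d(E)^{-1/p}$; plugging it in yields the stated constant $(q/(p-q))^{1/p}$, and taking a supremum over $E$ gives the upper bound.

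The left-hand inequality is a Chebyshev-type testing argument. Fix $t>0$ with $H^d(\{|f|>t\})>0$ and, for the moment, assume the level set $E_t:=\{|f|>t\}$ has finite positive $H^d$-content. Since $|f|\chi_{E_t}\ge t\chi_{E_t}$ pointwise,
\[
\int_{E_t}|f|^q\,{\rm d}H^d\ge t^q H^d(E_t),
\]
so using $E_t$ as a test set in the supremum on the middle term gives
\[
H^d(E_t)^{\frac1p-\frac1q}\left(\int_{E_t}|f|^q\,{\rm d}H^d\right)^{1/q}
\ge t\,H^d(E_t)^{1/p}.
\]
Taking the supremum over such $t$ reproduces precisely the definition of $\|f\|_{\wL^p(H^d)}$.

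The main technical obstacle lies in the integrability restriction $H^d(E)<\infty$ when the level set $E_t$ has infinite Hausdorff content (the case $H^d(E_t)=0$ contributes nothing to the weak norm). To handle this I would approximate $E_t$ from inside by the bounded pieces $E_{t,R}:=E_t\cap[-R,R]^n$, which satisfy $H^d(E_{t,R})\le(2R)^d<\infty$; applying the inequality above to $E_{t,R}$ and letting $R\to\infty$ recovers the full level-set contribution. The delicate point is that Hausdorff content is not automatically continuous along increasing sequences, so one must argue directly from the definition of $H^d$ as an infimum over countable cube coverings that $\sup_R H^d(E_{t,R})=H^d(E_t)$, or at least is unbounded whenever the right-hand side is infinite. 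Once this approximation is justified, assembling the two one-sided estimates completes the proof.
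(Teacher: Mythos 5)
Your proof follows the paper's own argument in both directions: the upper bound is the $r=\8$ case of the threshold-splitting computation behind Proposition \ref{prp2.3} (which is exactly what the paper does, simply citing \eqref{2.3}), and the lower bound is precisely the paper's Chebyshev test with $E=\{|f|>t\}$. Two comments. First, the ``technical obstacle'' you raise is real but is resolved more cheaply than via an increasing-sets lemma in the main case: whenever $\|f\|_{\wL^p(H^d)}<\8$ one has $H^d(\{|f|>t\})\le t^{-p}\|f\|_{\wL^p(H^d)}^p<\8$ for every $t>0$, so every level set of positive content is already an admissible test set and no truncation is needed. The truncation, and with it continuity of $H^d$ along increasing sequences, is only required to show the middle supremum is infinite when $\|f\|_{\wL^p(H^d)}=\8$; that continuity does hold ($H^d$ is comparable to the dyadic content, which is strongly subadditive and hence a Choquet capacity continuous from below, the fact underlying \cite{Orobitg-Verdera(1998)}), but the paper silently ignores this case altogether. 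Second, your optimization does not in fact produce the constant $(q/(p-q))^{1/p}$: balancing $A^qH^d(E)+\frac{q}{p-q}A^{q-p}\|f\|_{\wL^p(H^d)}^p$ at $A=\|f\|_{\wL^p(H^d)}H^d(E)^{-1/p}$ gives $\bigl(\tfrac{p}{p-q}\bigr)^{1/q}$, and the example $f(x)=x^{-1/p}\chi_{(0,1)}$ with $d=n=1$ shows the supremum actually attains $\bigl(\tfrac{p}{p-q}\bigr)^{1/q}>\bigl(\tfrac{q}{p-q}\bigr)^{1/p}$, so the constant in the statement (inherited from the loose bookkeeping in Proposition \ref{prp2.3}) should only be read up to a multiplicative constant. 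Neither point affects the validity of your approach, which is the paper's.
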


\begin{proof}
The second inequality holds by 
letting $r=\8$ in \eqref{2.3}.
We prove the reverse inequality.
For all $t>0$,
\begin{align*}
&
t\,H^d(\{
x\in\R^n:\,|f(x)|>t
\})^{\frac1p}
\\ &=
H^d(\{
x\in\R^n:\,|f(x)|>t
\})^{\frac1p-\frac1q}
\cdot
t\,H^d(\{
x\in\R^n:\,|f(x)|>t
\})^{\frac1q}
\\ &\le
H^d(\{
x\in\R^n:\,|f(x)|>t
\})^{\frac1p-\frac1q}
\lt(
\int_{\{|f|>t\}}|f|^q\,{\rm d}H^d
\rt)^{\frac1q}.
\end{align*}
Hence,
\[
t\,H^d(\{
x\in\R^n:\,\,|f(x)|>t
\})^{\frac1p}
\le
\sup_{0<H^d(E)<\8}
H^d(E)^{\frac1p-\frac1q}
\lt(
\int_{E}|f|^q\,{\rm d}H^d
\rt)^{\frac1q},
\]
as desired.
\end{proof}

We summarize elementary properties of the Choquet-Morrey spaces.

\begin{prp}\label{prp2.5}
\begin{enumerate}[{\rm (M1)}]
\item
If $0<p=q<\8$, then 
$\cM_p^p(H^d)=L^p(H^d)$ holds;
\item
If $0<q<p<\8$, then 
$\wL^p(H^d)
\hookrightarrow
\cM_q^p(H^d)$ holds;
\item
If $d=n$, then 
the Choquet-Morrey  spaces
$\cM^p_q(H^d)$ are the usual Morrey spaces $\cM^p_q(\R^n)$;
\item
If $0<q\le r\le p<\8$, then 
$\cM^p_r(H^d)
\hookrightarrow 
\cM^p_q(H^d)$ holds;
\item
If $0<d\le\theta d\le n$ and 
$0<q\le p<\8$, then
\[
\cM^p_q(H^d)
\hookrightarrow
\cM^{\theta p}_{\theta q}(H^{\theta d}).
\]
\end{enumerate}
\end{prp}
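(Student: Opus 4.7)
The plan is to verify each item separately; in every case the argument reduces to a one-cube estimate followed by taking the supremum over $Q \in \cQ(\R^n)$.

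Items (M1), (M3), and (M5) are direct. For (M1), the bound $\|f\|_{\cM^p_p(H^d)} \le \|f\|_{L^p(H^d)}$ is immediate from monotonicity of the Choquet integral, while the reverse uses the continuity-from-below property of $H^d$ on an expanding sequence of cubes $Q_k \uparrow \R^n$, which in turn yields monotone convergence for $\int |f|^p\chi_{Q_k}\,\rd H^d$. For (M3), since $H^n$ is Lebesgue measure, the Choquet integral of a non-negative function coincides with its Lebesgue integral, so the norms literally agree. For (M5), I apply Lemma \ref{lem2.1} to $f\chi_Q$ to get $\|f\chi_Q\|_{L^{\theta q}(H^{\theta d})} \le \theta^{1/(\theta q)}\|f\chi_Q\|_{L^q(H^d)}$, multiply by $\ell(Q)^{d/p - d/q} = \ell(Q)^{\theta d/(\theta p) - \theta d/(\theta q)}$, and take the supremum over $Q$.

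Item (M2) is a specialization of Proposition \ref{prp2.3}(b) to $E = Q$, $r = \infty$: since $H^d(Q) = \ell(Q)^d$,
\[
\ell(Q)^{d/p - d/q}\|f\chi_Q\|_{L^q(H^d)} \lesssim \|f\chi_Q\|_{L^{p,\infty}(H^d)} \le \|f\|_{\wL^p(H^d)},
\]
and the supremum over $Q$ gives (M2).

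The main technical point is (M4); the task is to establish the local H\"older-type bound $\|f\chi_Q\|_{L^q(H^d)} \lesssim H^d(Q)^{1/q - 1/r}\|f\chi_Q\|_{L^r(H^d)}$ for $q \le r$, after which multiplication by $\ell(Q)^{d/p - d/q}$ and a supremum yield the claim. To avoid invoking a H\"older inequality for a set-function that is not a measure, I rely on the elementary pointwise bound $|f|^q \le A^q + A^{q-r}|f|^r$, valid for every $A > 0$. Since adding a constant commutes with the Choquet integral, monotonicity of the Choquet integral yields
\[
\int_Q |f|^q\,\rd H^d \le A^q H^d(Q) + A^{q-r}\int_Q |f|^r\,\rd H^d,
\]
and optimizing over $A$ delivers the desired local inequality with an explicit constant. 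The only non-routine step is this local H\"older bound in (M4); everything else is an immediate consequence of the tools already assembled in Section \ref{sec2}.
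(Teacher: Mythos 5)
Your proposal is correct and follows the paper's route: for (M5) you reproduce exactly the paper's argument via Lemma \ref{lem2.1} applied to $f\chi_Q$, your (M2) is precisely the specialization of Proposition \ref{prp2.3}(b) (equivalently \eqref{2.4} with $r=p$) that the paper's remark already records, and (M1), (M3), (M4) are the verifications the paper dismisses as obvious. Your level-splitting $|f|^q\le A^q+A^{q-r}|f|^r$ for (M4) is the same optimization device used in the proof of Proposition \ref{prp2.3}, just carried out pointwise to sidestep any H\"older inequality for the Choquet integral, and the continuity-from-below of $H^d$ you invoke in (M1) is the standard capacity property underlying the Orobitg--Verdera framework, so no gaps remain.
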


\begin{proof}
The assertions (M1)--(M4) are obvious.
To show (M5) there holds, 
by using \eqref{2.1} and 
setting
$\tilde{d}=\theta d$, 
$\tilde{p}=\theta p$ and
$\tilde{q}=\theta q$, 
\[
\ell(Q)^{\frac{\tilde{d}}{\tilde{p}}-\frac{\tilde{d}}{\tilde{q}}}
\|f\chi_{Q}\|_{L^{\tilde{q}}(H^{\tilde{d}})}
\le\theta
\ell(Q)^{\frac dp-\frac dq}
\|f\chi_{Q}\|_{L^q(H^d)}.
\]
\end{proof}

It is well known that, 
due to Hedberg \cite{Hedberg72}, 
for $1\le p<\8$, the pointwise estimate
\[
|I_{\al}f(x)|
\lesssim
\|f\|_{L^p(\R^n)}^{\frac{\al p}{n}}
Mf(x)^{1-\frac{\al p}{n}}.
\]
To prove the boundedness of the fractional integral operators 
on weak Choquet and Choquet-Morrey spaces 
with Hausdorff content, 
we extend Hedberg's inequality to the following.

\begin{lem}\label{lem4.1}
Let $0\le\bt<\al<n$ and 
$d/n\le p<q<\8$ with 
\[
\frac{d-\bt p}{q}
=
\frac{d-\al p}{p}.
\]
Then 
\[
|I_{\al}f(x)|
\lesssim
\left(\frac dp-\al\right)^{\frac pq-1}
(\al-\bt)^{-\frac pq}
\|f\|_{\cM^p_{d/n}(H^d)}^{1-\frac pq}
M_{\bt}f(x)^{\frac pq},
\quad x\in\R^n.
\]
\end{lem}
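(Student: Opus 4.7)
The plan is to follow the classical Hedberg scheme: split $I_\al f(x)$ at a radius $R>0$, bound the near piece by $M_\bt f(x)$, bound the far piece by the Choquet--Morrey norm, and optimize in $R$. For $R>0$ write
\[
|I_\al f(x)| \le \int_{|y-x|<R} \frac{|f(y)|}{|x-y|^{n-\al}}\rd y + \int_{|y-x|\ge R} \frac{|f(y)|}{|x-y|^{n-\al}}\rd y =: J_1(R)+J_2(R).
\]

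\emph{Near piece.} I would decompose the inner ball into dyadic annuli $\{2^{-k}R\le |y-x|<2^{-k+1}R\}$ for $k\ge 1$, enclose each in a cube $Q_k$ centered at $x$ of side $\sim 2^{-k+1}R$, and use $\int_{Q_k}|f|\rd y \le \ell(Q_k)^{n-\bt} M_\bt f(x)$ directly from the definition of $M_\bt$. Summing the geometric series $\sum_{k\ge 1}(2^{-k}R)^{\al-\bt}$, which converges because $\al>\bt$, and using the elementary bound $(2^x-1)^{-1}\le (x\log 2)^{-1}$, produces
\[
J_1(R) \lesssim \frac{R^{\al-\bt}}{\al-\bt}\, M_\bt f(x).
\]

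\emph{Far piece.} I would decompose the complement into dyadic annuli $\{2^kR\le |y-x|<2^{k+1}R\}$ for $k\ge 0$, enclose each in a cube $Q_k$ of side $\sim 2^{k+1}R$, and pass from Lebesgue to $H^d$ via Lemma~\ref{lem2.1}, applied with $\theta=n/d$ and inner exponent $d/n$ (so that $\theta p=1$ and $\theta d=n$): since $H^n$ coincides with Lebesgue measure,
\[
\int_{Q_k}|f|\rd y = \|f\chi_{Q_k}\|_{L^1(H^n)} \le \tfrac{n}{d}\,\|f\chi_{Q_k}\|_{L^{d/n}(H^d)} \le \tfrac{n}{d}\,\ell(Q_k)^{n-d/p}\|f\|_{\cM^p_{d/n}(H^d)}
\]
by the very definition of the Choquet--Morrey norm. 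Summing $\sum_{k\ge 0}(2^kR)^{\al-d/p}$, which converges because $p<d/\al$, yields
\[
J_2(R) \lesssim \frac{R^{\al-d/p}}{d/p-\al}\, \|f\|_{\cM^p_{d/n}(H^d)}.
\]

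\emph{Optimization.} Setting $a:=\al-\bt$, $b:=d/p-\al$, the hypothesis $(d-\bt p)/q=(d-\al p)/p$ translates to $p/q=b/(a+b)$ and $1-p/q=a/(a+b)$. Choosing $R$ so that the two bounds balance, i.e.\ $R^{a+b}\sim \|f\|_{\cM^p_{d/n}(H^d)}/M_\bt f(x)$, gives
\[
|I_\al f(x)| \lesssim \lt(\frac{1}{a}+\frac{1}{b}\rt) M_\bt f(x)^{p/q} \|f\|_{\cM^p_{d/n}(H^d)}^{1-p/q}.
\]
Finally, the asymmetric product $(d/p-\al)^{p/q-1}(\al-\bt)^{-p/q}=b^{-a/(a+b)}a^{-b/(a+b)}$ appearing in the statement is obtained from weighted AM--GM: $a^{b/(a+b)}b^{a/(a+b)}\le \tfrac{2ab}{a+b}$, which rearranges to $\tfrac{1}{a}+\tfrac{1}{b}\le 2\,b^{-a/(a+b)}a^{-b/(a+b)}$.

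The main technical obstacle is the bookkeeping of the two small-parameter constants $(\al-\bt)^{-1}$ and $(d/p-\al)^{-1}$: one must track them carefully through the two dyadic sums (using $(2^x-1)^{-1}\lesssim x^{-1}$ uniformly), and then convert the resulting sum into the asymmetric product with exponents $p/q-1$ and $-p/q$ dictated by the scaling condition.
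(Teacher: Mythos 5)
Your proposal is correct and follows essentially the same route as the paper: the Hedberg-type dyadic annulus decomposition at radius $R$, the near piece controlled by $M_\bt f$ with the factor $(\al-\bt)^{-1}$, the far piece controlled by $\|f\|_{\cM^p_{d/n}(H^d)}$ via Lemma~\ref{lem2.1} with the factor $(d/p-\al)^{-1}$, and optimization in $R$. The only (cosmetic) difference is that you balance the two terms without the weights $1/a$, $1/b$ and then recover the asymmetric constant $(d/p-\al)^{\frac pq-1}(\al-\bt)^{-\frac pq}$ by weighted AM--GM, whereas the paper obtains it directly from the optimal choice of $r$.
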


\begin{proof}
Fix $r>0$. We decompose
\begin{align*}
|I_{\al}f(x)|
&\le
\sum_{j=-\8}^{\8}
\int_{2^jr \le |x-y| < 2^{j+1}r} 
\frac{|f(y)|}{|x-y|^{n-\al}}
\,{\rm d}y
\\ &\le
\sum_{j=-\8}^{\8}
\frac{1}{(2^jr)^{n-\al}}
\int_{|x-y|<2^{j+1}r}
|f(y)|\,{\rm d}y
\\ &=
\sum_{j=-\8}^{-1}
\frac{1}{(2^jr)^{n-\al}}
\int_{|x-y|<2^{j+1}r}
|f(y)|\,{\rm d}y
+
\sum_{j=0}^{\8}
\frac{1}{(2^jr)^{n-\al}}
\int_{ |x-y|<2^{j+1}r}
|f(y)|\,{\rm d}y
\\ &=:
J_1+J_2.
\end{align*}
We estimate
\begin{align*}
J_1
\lesssim
\sum_{j=-\8}^{-1}
(2^jr)^{\al-\bt}
M_{\bt}f(x)
\lesssim
\frac{r^{\al-\bt}}{\al-\bt}
M_{\bt}f(x),
\end{align*}
where, we have used 
$2^{\al-\bt}-1 \gtrsim \al-\bt$.
By \eqref{2.1}, we estimate
\begin{align*}
J_2
\lesssim
\sum_{j=1}^{\8}
(2^jr)^{\al-\frac dp}
\|f\|_{\cM^p_{d/n}(H^d)}
\lesssim
\frac{r^{\al-\frac dp}}{\frac dp-\al}
\|f\|_{\cM^p_{d/n}(H^d)},
\end{align*}
where, we have used
$2^{\frac dp-\al}-1 \gtrsim \frac dp-\al$.
Then, 
taking the optimal quantity $r>0$ 
in the previous inequalities and 
noticing that
\[
\frac pq
=
\frac{d-\al p}{d-\bt p},
\]
we obtain
\[
|I_{\al}f(x)|
\lesssim
\lt(\frac dp-\al\rt)^{\frac pq-1}
(\al-\bt)^{-\frac pq}
\|f\|_{\cM^p_{d/n}(H^d)}^{1-\frac pq}
M_{\bt}f(x)^{\frac pq},
\]
as desired.
\end{proof}

\section{Proof of the theorems}\label{sec3}
In what follows we prove the theorems.

\subsection*{
Proof of Theorem \ref{thm1.1} (i)
}
Set $\dl=d-\al r$ and, for $t>0$, 
\[
\Om(M_{\al}f;t)
:=
\{x\in\R^n:\,M_{\al}f(x)>t\}.
\]
Then we see that
\[
\Om(M_{\al}f;t)
=
\{x\in\R^n:\,
M_{\al}[f\chi_{\Om(M_{\al}f;t)}](x)>t\}.
\]
Thus, there holds 
\begin{align*}
t\,H^{\dl}(\Om(M_{\al}f;t))^{\frac1q}
&=
H^{\dl}(\Om(M_{\al}f;t))^{\frac1q-\frac1r}
\cdot
t\,H^{\dl}(\Om(M_{\al}f;t))^{\frac1r}
\\ &\lesssim
H^{\dl}(\Om(M_{\al}f;t))^{\frac1q-\frac1r}
\cdot
\|f\chi_{\Om(M_{\al}f;t)}\|_{L^r(H^d)},
\end{align*}
where, we have used \eqref{1.1} or 
\eqref{1.2}, when $r=d/n$.
By the use of the Kolmogorov-type inequality \eqref{2.3},
we need only verify that, 
for a compact set $E\subset\R^n$,
\[
H^{\dl}(E)^{\frac1r-\frac1q}
\ge
H^d(E)^{\frac1r-\frac1p}.
\]
By the fact $\dl<d$ and \eqref{2.2}
we have that
\[
H^{\dl}(E)^{\frac1r-\frac1q}
\ge
H^d(E)^{\frac{\dl}{d}(\frac1r-\frac1q)},
\]
and
\[
\frac{\dl}{d}\lt(\frac1r-\frac1q\rt)
=
\frac1r-\frac\al d
-
\frac{d-\al p}{dp}
=
\frac1r-\frac1p.
\]
This completes the proof.
\qed

\subsection*{
Proof of Theorem \ref{thm1.1} (ii)
}
Thanks to the continuity,
one can choose 
$1<\theta<n/d$ and 
$\frac rp \al<\bt<\al$ 
so that
\[
\theta(d-\bt p)
=
d-\al r
=
d-\frac rp \al p.
\]
Setting $\dl=\theta d$ and 
$u=\theta p$, one has
\[
\dl-\bt u=d-\al r.
\]
This equation and 
\[
\frac{d-\al r }{q}
=
\frac{d-\al p}{p}
\]
immediately imply
\[
\frac{\dl-\bt u}{q}
=
\frac{d-\al r}{q}
=
\frac{d-\al p}{p}
=
\frac dp-\al
=
\frac\dl u-\al
=
\frac{\dl-\al u}{u}.
\]
Noticing $\bt<\al$, 
we see that $p<u<q$.
The equation
\[
\frac{\dl-\bt u}{q}
=
\frac{\dl-\al u}{u}
\]
and Lemma \ref{lem4.1} yield
\[
|I_{\al}f(x)|
\lesssim
\|f\|_{\cM^u_{\dl/n}(H^{\dl})}^{1-\frac uq}
M_{\bt}f(x)^{\frac uq},
\quad x\in\R^n.
\]
The equations
\[
\frac{d-\al r}u
=
\frac{\dl-\bt u}u
=
\frac{d-\al p}p
\]
and Theorem \ref{thm1.1} (i) yield
\[
\|(M_{\bt}f)^{u/q}\|_{\wL^q(H^{d-\al r})}
=
\|M_{\bt}f\|_{\wL^u(H^{d-\al r})}^{u/q}
\lesssim
\|f\|_{\wL^p(H^d)}^{u/q}.
\]
Since we always have 
\[
\|f\|_{\cM^u_{\dl/n}(H^{\dl})}
\lesssim
\|f\|_{\cM^p_{d/n}(H^d)}
\lesssim
\|f\|_{\wL^p(H^d)},
\]
which completes the proof.
\qed

\subsection*{
Proof of Theorem \ref{thm1.1} (iii)
}
Setting $\bt=\frac rp \al$, we have
\[
\frac{d-\bt p}{q}
=
\frac{d-\al r}{q}
=
\frac{d-\al p}{p}.
\]
It follows from Lemma \ref{lem4.1} that
\[
\|I_{\al}f\|_{L^q(H^{d-\bt p})}
\lesssim
\|f\|_{\cM^p_{d/n}(H^d)}^{1-\frac pq}
\|(M_{\bt}f)^{\frac pq}\|_{L^q(H^{d-\bt p})}.
\]
By \eqref{1.1} we have that
\[
\|M_{\bt}f\|_{L^p(H^{d-\bt p})}
\lesssim
\|f\|_{L^p(H^d)},
\]
and, by the inclusion property, that
\[
\|f\|_{\cM^p_{d/n}(H^d)}
\lesssim
\|f\|_{L^p(H^d)}.
\]
These complete the proof.
\qed

\subsection*{
Proof of Theorem \ref{thm1.2} (ii)
}
This theorem can be proved 
the same manner as that of 
Theorem \ref{1.1} (ii).
\qed

\section{Appendix}\label{sec4}
In what follows we gather the results 
which hold by our theorems and the embedding lemma (Lemma \ref{lem2.1}).

\begin{prp}
Let $0<d,\dl<n$ and $0\le \al<n$.
Suppose that $q>p$, $d/n<p<d/\al$ and 
\[
\frac{\dl}{q}=\frac{d}{p}-\al.
\]
Then 
\[
\|M_{\al}f\|_{\wL^q(H^{\dl})}
\lesssim
\|f\|_{\wL^p(H^d)}.
\]
\end{prp}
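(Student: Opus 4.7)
The idea is to combine Theorem~\ref{thm1.1}(i) with the embedding (W2) of Proposition~\ref{prp2.2}. More precisely, I would first apply Theorem~\ref{thm1.1}(i) with an auxiliary parameter $r\in[d/n,p)$ to obtain a bound into an intermediate weak Choquet space $\wL^{q_0}(H^{d-\al r})$, and then use (W2) to embed this space into $\wL^q(H^{\dl})$.

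Writing $q_0:=p(d-\al r)/(d-\al p)$ (so that Theorem~\ref{thm1.1}(i) applies with the exponent $r$) and $\theta:=q/q_0$, the desired embedding $\wL^{q_0}(H^{d-\al r})\hookrightarrow\wL^q(H^{\dl})$ supplied by (W2) requires $\theta q_0=q$ (automatic) together with $\theta(d-\al r)=\dl$. A brief calculation reduces this latter equality to $q(d-\al p)/p=\dl$, which is precisely the standing hypothesis $\dl/q=d/p-\al$. Thus the scaling matches automatically, and the only remaining constraints are $d/n\le r<p$, $\theta\ge 1$, and $\theta(d-\al r)=\dl\le n$; the last is immediate from $\dl<n$.

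The remaining task is to choose $r\in[d/n,p)$ so that $\theta\ge 1$, equivalently $\al r\ge d-\dl$. When $\al=0$ this just reads $\dl\ge d$, which is automatic from $q>p$ and $\dl=qd/p$. When $\al>0$ one needs $r\ge(d-\dl)/\al$, so I would take $r:=\max(d/n,\,(d-\dl)/\al)$; the crucial point is that $(d-\dl)/\al<p$, which ensures $r<p$. A short algebraic manipulation using $\dl=q(d-\al p)/p$ and $q>p$ reduces this strict inequality to $d>\al p$, which is exactly the hypothesis $p<d/\al$.

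With $r$ so chosen, Theorem~\ref{thm1.1}(i) yields
\[
\|M_{\al}f\|_{\wL^{q_0}(H^{d-\al r})}\ls\|f\|_{\wL^p(H^d)},
\]
and then (W2) of Proposition~\ref{prp2.2} with dilation factor $\theta=q/q_0\ge 1$ gives $\wL^{q_0}(H^{d-\al r})\hookrightarrow\wL^q(H^{\dl})$, completing the proof. The only genuinely nontrivial point is verifying nonemptiness of the admissible range of $r$; this is where the hypothesis $p<d/\al$ is used in an essential way.
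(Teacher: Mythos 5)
Your argument is correct and follows essentially the same route as the paper: apply Theorem~\ref{thm1.1}(i) with an auxiliary exponent $r<p$ and then upgrade to $(\dl,q)$ via the dilation embedding (W2) of Proposition~\ref{prp2.2}, with the scaling $\theta(d-\al r)=\dl$ checked to be automatic from $\dl/q=d/p-\al$. The only difference is cosmetic: the paper asserts the existence of an admissible $r$ without detail (implicitly one can take $r$ close to $p$), while you exhibit the explicit choice $r=\max(d/n,(d-\dl)/\al)$ and verify $r<p$ from $p<d/\al$.
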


\begin{proof}
If we are given $q$, $q>p$, and 
$\dl$, $0<\dl\le n$, such that
\[
\frac{\dl}{q}
=
\frac{d-\al p}{p}
=
\frac{d}{p}-\al.
\]
Then, because $\dl>d-\al p$,
we can choose
$u$, $r$, $d/n<r<p$, and 
$\theta>1$ so that
\[
q=\theta u,
\quad
\dl=\theta(d-\al r).
\]
Since we have
\[
\frac{d-\al r}{u}
=
\frac{d-\al p}{p},
\]
Theorem \ref{thm1.1} (i) yields
\[
\|M_{\al}f\|_{\wL^u(H^{d-\al r})}
\lesssim
\|f\|_{\wL^p(H^d)},
\]
which gives, by the embedding 
(Proposition \ref{prp2.2}), that
\[
\|M_{\al}f\|_{\wL^q(H^{\dl})}
\lesssim
\|f\|_{\wL^p(H^d)}.
\]
This completes the proof.
\end{proof}

\begin{prp}\label{Spanne}
Let $0<d,\dl\le n$ and $0\le \al<n$.
Suppose that 
$d/n<r\le p<d/\al$, $q\ge p$, 
$\dl\ge d-\al r$, $s\ge r$ and
\[
\frac{\dl}{q}=\frac{d}{p}-\al,
\quad
\frac{\dl}{s}=\frac{d}{r}-\al.
\]
Then 
\[
\|M_{\al}f\|_{\cM^q_s(H^{\dl})}
\lesssim
\|f\|_{\cM^p_r(H^d)}.
\]
\end{prp}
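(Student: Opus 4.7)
The plan is to mirror the proof of the preceding proposition, first reducing to Theorem~\ref{thm1.2}(i) and then inflating the Hausdorff dimension by means of the embedding (M5) in Proposition~\ref{prp2.5}. Unlike the weak-type statement just proved, the parameter $r$ appearing on the right-hand side of the target inequality is fixed by the hypothesis (the Choquet-Morrey norm genuinely depends on its second index), so we must apply Theorem~\ref{thm1.2}(i) with exactly this $r$; the sole degree of freedom lies in the scaling factor $\theta$.

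First, I would set $q_0:=p(d-\al r)/(d-\al p)$, so that $(d-\al r)/q_0=(d-\al p)/p$. Since $d/n<r\le p<d/\al$, Theorem~\ref{thm1.2}(i) applies and yields
\[
\|M_\al f\|_{\cM^{q_0}_r(H^{d-\al r})}
\lesssim
\|f\|_{\cM^p_r(H^d)}.
\]
The admissibility condition $r\le q_0$ indeed holds, because $r\le p$ and $d-\al r\ge d-\al p$ together give $q_0\ge p\ge r$.

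Next, I would define $\theta:=\dl/(d-\al r)$. The hypothesis $\dl\ge d-\al r$ gives $\theta\ge 1$, while $\dl\le n$ gives $\theta(d-\al r)\le n$, so property (M5) of Proposition~\ref{prp2.5} applies with base dimension $d-\al r$. A direct computation using the relations $\dl/q=(d-\al p)/p$ and $\dl/s=(d-\al r)/r$ then gives
\[
\theta q_0=\frac{\dl p}{d-\al p}=q,
\qquad
\theta r=\frac{\dl r}{d-\al r}=s,
\qquad
\theta(d-\al r)=\dl,
\]
so the embedding reads $\cM^{q_0}_r(H^{d-\al r})\hookrightarrow\cM^q_s(H^{\dl})$. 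Combining it with the previous display yields the claim.

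The argument is essentially a bookkeeping exercise, and no real obstacle is expected. The only substantive verifications are $\theta\ge 1$, $\theta(d-\al r)\le n$, and $r\le q_0$, each of which follows immediately from the stated inequalities; the key conceptual point is recognizing that the two scaling relations $\dl/q=d/p-\al$ and $\dl/s=d/r-\al$ force the \emph{same} inflation factor $\theta=\dl/(d-\al r)=q/q_0=s/r$, which is precisely what makes (M5) applicable.
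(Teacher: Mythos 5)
Your proposal is correct and follows essentially the same route as the paper: apply Theorem~\ref{thm1.2}(i) at the level $H^{d-\al r}$ with target exponent determined by $(d-\al r)/q_0=(d-\al p)/p$, then inflate by $\theta=\dl/(d-\al r)$ via (M5), checking that $\theta q_0=q$ and $\theta r=s$. Your write-up is if anything slightly more careful than the paper's (you verify $r\le q_0$ and $\theta(d-\al r)\le n$ explicitly), but the argument is the same.
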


\begin{proof}
If we are given $q$, $q\ge p$, and 
$\dl$, $n\ge\dl\ge d-\al r$, such that
\[
\frac{\dl}{q}
=
\frac{d}{p}-\al.
\]
Then, we can choose
$u$ and $\theta\ge1$ so that
\[
q=\theta u
\quad\text{and}\quad
\dl=\theta(d-\al r).
\]
Since we have
\[
\frac{d-\al r}{u}
=
\frac{d-\al p}{p},
\]
it follows from Theorem \ref{thm1.2} (i) that
\[
\|M_{\al}f\|_{\cM^u_r(H^{d-\al r})}
\lesssim
\|f\|_{\cM^p_r(H^d)}.
\]
Since we have also 
\[
\frac dr-\al
=
\frac{d-\al r}{r}
=
\frac{\dl}{\theta r},
\]
we must $s=\theta r$. 
Thus, by the inclusion property 
(Proposition \ref{prp2.5}) 
we obtain
\[
\|M_{\al}f\|_{\cM^q_s(H^{\dl})}
\lesssim
\|M_{\al}f\|_{\cM^u_r(H^{d-\al r})}
\lesssim
\|f\|_{\cM^p_r(H^d)},
\]
as desired.
\end{proof}

The case $d=\dl=n$ of 
Proposition \ref{Spanne}
was first studied in unpublished work of Spanne 
(for the fractional integral operator)
and Peetre published in \cite[Theorem 5.4]{Peetre69}.
So, 
Proposition \ref{Spanne} is 
the Spanne-type theorem.
After that, 
the Spanne-type condition 
is generalized as
\[
\frac1q=\frac1p-\frac \al n,
\quad
\frac{s}{q}=\frac{r}{p}
\]
by Adams \cite{Adams75}, 
Chiarenza and Frasca \cite{ChFr87}.

\begin{prp}
Let $0<d\le n$, $0<\al<n$, 
$d/n<r\le p<\8$ and
$0<s\le q<\8$.
Suppose that
\[
\frac1q=\frac1p-\frac\al d,
\quad
\frac sq=\frac rp.
\]
Then 
\[
\|I_{\al}f\|_{\cM^q_s(H^d)}
\lesssim
\|f\|_{\cM^p_r(H^d)}.
\]
\end{prp}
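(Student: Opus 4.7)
The plan is to run a Hedberg-type reduction for $I_{\al}$ using Lemma \ref{lem4.1} with the degenerate choice $\bt=0$, thereby reducing matters to the boundedness of the Hardy--Littlewood maximal operator $M$ on the Morrey--Hausdorff space $\cM^p_r(H^d)$, which is exactly the $\al=0$ specialization of Theorem \ref{thm1.2} (i).

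First I would check that Lemma \ref{lem4.1} applies with $\bt=0$: the condition $p>d/n$ follows from $r>d/n$ and $p\ge r$, while the relation $\frac{d-\bt p}{q}=\frac{d-\al p}{p}$ with $\bt=0$ reads $\frac{d}{q}=\frac{d-\al p}{p}$, which is precisely the hypothesis $\frac{1}{q}=\frac{1}{p}-\frac{\al}{d}$. The lemma then delivers the pointwise bound
\[
|I_{\al}f(x)|
\lesssim
\|f\|_{\cM^p_{d/n}(H^d)}^{1-p/q}\,
Mf(x)^{p/q}.
\]
Next I would take the $\cM^q_s(H^d)$ norm of both sides. Directly from the definition of the Morrey norm one has the homogeneity identity $\|g^{p/q}\|_{\cM^q_s(H^d)}=\|g\|_{\cM^p_{sp/q}(H^d)}^{p/q}$, and the hypothesis $s/q=r/p$ forces $sp/q=r$. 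This yields
\[
\|I_{\al}f\|_{\cM^q_s(H^d)}
\lesssim
\|f\|_{\cM^p_{d/n}(H^d)}^{1-p/q}\,
\|Mf\|_{\cM^p_r(H^d)}^{p/q}.
\]
To close the argument, Theorem \ref{thm1.2} (i) with $\al=0$ bounds $\|Mf\|_{\cM^p_r(H^d)}$ by $\|f\|_{\cM^p_r(H^d)}$, and Proposition \ref{prp2.5} (M4) with $d/n\le r\le p$ bounds $\|f\|_{\cM^p_{d/n}(H^d)}$ by $\|f\|_{\cM^p_r(H^d)}$; the exponents $1-p/q$ and $p/q$ then collapse the product to the desired right-hand side.

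The only non-routine point is identifying the correct auxiliary parameter in Lemma \ref{lem4.1}. A superficially more natural strategy, namely applying Theorem \ref{thm1.2} (ii) directly and then rescaling via the embedding (M5) to return to Hausdorff dimension $d$, fails because (M5) multiplies the two Morrey exponents and the Hausdorff dimension by one common factor, which is incompatible with the Adams--Chiarenza--Frasca scaling $s/q=r/p$ whenever $r<p$. The choice $\bt=0$ sidesteps this mismatch, because the reduction is to $M$ itself rather than to a proper fractional maximal operator, and $s/q=r/p$ is exactly the condition ensuring that the $p/q$-th power homogeneity lands the second Morrey exponent of $Mf$ at $r$.
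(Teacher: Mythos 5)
Your proposal is correct and follows essentially the same route as the paper's own proof: apply Lemma \ref{lem4.1} with $\bt=0$, use the homogeneity identity $\|(Mf)^{p/q}\|_{\cM^q_s(H^d)}=\|Mf\|_{\cM^p_r(H^d)}^{p/q}$ (where $sp/q=r$), and conclude via the Morrey boundedness of $M$ and the inclusion $\cM^p_r(H^d)\hookrightarrow\cM^p_{d/n}(H^d)$. Your verification of the hypotheses of Lemma \ref{lem4.1} and your remark on why the rescaling via (M5) would fail are accurate, if slightly more detailed than the paper's exposition.
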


\begin{proof}
By taking $\bt=0$ in Lemma \ref{lem4.1},
we have
\[
|I_{\al}f(x)|
\lesssim
\|f\|_{\cM^p_{d/n}(H^d)}^{1-\frac pq}
Mf(x)^{\frac pq}.
\]
The boundedness of $M$ on the Morrey spaces
enable us that
\begin{align*}
\|I_{\al}f\|_{\cM^q_s(H^d)}
&\lesssim
\|f\|_{\cM^p_{d/n}(H^d)}^{1-\frac pq}
\|(Mf)^{p/q}\|_{\cM^q_s(H^d)}
\\ &=
\|f\|_{\cM^p_{d/n}(H^d)}^{1-\frac pq}
\|Mf\|_{\cM^p_r(H^d)}^{p/q}
\\ &\lesssim
\|f\|_{\cM^p_{d/n}(H^d)}^{1-\frac pq}
\|f\|_{\cM^p_r(H^d)}^{\frac pq}
\lesssim
\|f\|_{\cM^p_r(H^d)}.
\end{align*}
where, in the last inequality we have used 
Proposition \ref{prp2.5}.
\end{proof}

\end{document}